\documentclass[12pt,reqno]{article}

\usepackage[usenames]{color}
\usepackage[colorlinks=true,
linkcolor=webgreen, filecolor=webbrown,
citecolor=webgreen]{hyperref}

\definecolor{webgreen}{rgb}{0,.5,0}
\definecolor{webbrown}{rgb}{.6,0,0}

\usepackage{amssymb}
\usepackage{graphicx}
\usepackage{amscd}
\usepackage{lscape}
\usepackage{tikz}
\usepackage{tikz-cd}
\usepackage{pgfplots}

\usetikzlibrary{matrix}
\usetikzlibrary{fit,shapes}
\usetikzlibrary{positioning, calc}
\tikzset{circle node/.style = {circle,inner sep=1pt,draw, fill=white},
        X node/.style = {fill=white, inner sep=1pt},
        dot node/.style = {circle, draw, inner sep=5pt}
        }
\usepackage{tkz-fct}

\usepackage{amsthm}
\newtheorem{theorem}{Theorem}
\newtheorem{lemma}[theorem]{Lemma}
\newtheorem{proposition}[theorem]{Proposition}
\newtheorem{corollary}[theorem]{Corollary}

\theoremstyle{definition}

\newtheorem{example}[theorem]{Example}

\usepackage{float}

\usepackage{graphics,amsmath}
\usepackage{amsfonts}
\usepackage{latexsym}
\usepackage{epsf}

\DeclareMathOperator{\Rev}{Rev}
\DeclareMathOperator{\revert}{revert}

\setlength{\textwidth}{6.5in} \setlength{\oddsidemargin}{.1in}
\setlength{\evensidemargin}{.1in} \setlength{\topmargin}{-.5in}
\setlength{\textheight}{8.9in}

\newcommand{\seqnum}[1]{\href{http://oeis.org/#1}{\underline{#1}}}

\begin{document}

\begin{center}
\vskip 1cm{\LARGE\bf On the inversion of Riordan arrays} \vskip 1cm \large
Paul Barry\\
School of Science\\
Waterford Institute of Technology\\
Ireland\\
\href{mailto:pbarry@wit.ie}{\tt pbarry@wit.ie}
\end{center}
\vskip .2 in

\begin{abstract} Many Riordan arrays play a significant role in algebraic combinatorics. We explore the inversion of Riordan arrays in this context. We give a general construct for the inversion of a Riordan array, and study this in the  case of various subgroups of the Riordan group. For instance, we show that the inversion of an ordinary Bell matrix is an exponential Riordan array in the associated subgroup. Examples from combinatorics and algebraic combinatorics illustrate the usefulness of such inversions. We end with a brief look at the inversion of exponential Riordan arrays. A final example places Airey's convergent factor in the context of a simple exponential Riordan array.
\end{abstract}

\section{Preliminaries}
We let $\mathcal{F}=\{ a_0+a_1 x+ a_2 x^2+ \cdots \,| a_i \in \mathbf{R}\}$ be the set of formal power series with coefficients $a_i$ drawn from the ring $\mathbf{R}$. This ring can be any ring over which the operations we will carry out make sense, but for concreteness it can be assumed to be $\mathbb{Q}$. For combinatorial problems, the ring $\mathbf{R}\}$ is often the ring of integers $\mathbb{Z}$. It will be seen that most of the matrices we deal with have integer entries. We shall use two distinguished subsets of $\mathcal{F}$, namely
$$\mathcal{F}_0=\{ a_0+a_1 x+ a_2 x^2+ \cdots \,| a_i \in \mathbf{R}, a_0 \ne 0\},$$ and
$$\mathcal{F}_1=\{ a_1 x+ a_2 x^2+ \cdots \,| a_i \in \mathbf{R}, a_1 \ne 0\}.$$ Elements of $\mathcal{F}_1$ are composable and possess compositional inverses. If $f(x) \in \mathcal{F}_1$, we shall denote by $\bar{f}(x)$ or $\Rev(f)(x)$ its compositional inverse. Thus we have $\bar{f}(f(x))=x$ and $f(\bar{f}(x))=x$.

Throughout our exposition, we will stipulate that $g(x) \in \mathcal{F}_0$, $f(x) \in \mathcal{F}_0$, $u(x) \in \mathcal{F}_0$ and $v(x) \in \mathcal{F}_1$.
Without much loss of generality, we will also assume that $g(0)=f(0)=u(0)=v'(0)=1$.

A Riordan array \cite{book, SGWW} may be defined by a pair $(u(x), v(x)) \in \mathcal{F}_0 \times  \mathcal{F}_1$ of power series, represented by the invertible lower-triangular matrix $\left(t_{n,k}\right)$ where
$$t_{n,k}=[x^n] u(x)v(x)^k.$$
Here, the functional $[x^n]$ acts on elements of $\mathcal{F}$ by returning the coefficient of $x^n$ of the power series in question \cite{MC}. We shall sometimes write $(g, f)_{n,k}$ for $t_{n,k}$. 

The product of two Riordan arrays is again a Riordan array, defined by 
$$(g, f) \cdot (u, v)=(g.u(f), v(f)).$$ In the matrix representation, this corresponds to the usual multiplication of matrices. The inverse of a Riordan array is a Riordan array, given by 
$$(g, f)^{-1}=\left(\frac{1}{g(\bar{f})}, \bar{f}\right).$$ In the matrix representation, this coincides with the normal inverse of a matrix. The identity element is $(1,x)$.

Note that all matrices considered will be lower-triangular matrices indexed by $(n,k)$ where $0 \le n,k le \infty$. We use suitable truncations of such matrices in the text. Many examples of Riordan arrays can be found in the On-Line Encyclopedia of Integer Sequences (OEIS) \cite{SL1, SL2}, along with many of the sequences occurring in this note. Such sequences are referred to by their OEIS $Annnnnn$ numbers. For instance, the Catalan numbers $C_n=\frac{1}{n1+1} \binom{2n}{n}$ have the OEIS number \seqnum{A000108}. All the matrices in this note are assumed to be lower-triangular, and most are invertible.

The Fundamental Theorem of Riordan arrays (FTRA) specifies how a Riordan array $(g, f)$ operates on a generating function. Thus we have 
$$(g(x), f(x))\cdot h(x)= g(x)h(f(x)).$$ In the matrix representation, this corresponds to the matrix $(t_{n,k})$ multiplying the vector whose elements are $(h_0, h_1, h_2, \ldots)$ where $h(x)=\sum_{n=0}^{\infty} h_n x^n$. 

The ability to switch between the power series view of Riordan arrays, and the FTRA, on the one hand, and the linear algebra view involving matrix multiplication and matrix inverses, makes Riordan arrays a powerful tool in many of the settings of algebraic combinatorics.

To each Riordan array $(g(x), f(x))$ we associate the bivariate generating function
$$ G(x,y)=\frac{g(x)}{1-y f(x)}.$$ Thus we have that
$$G(x,y)=\sum_{n,k \ge 0} t_{n,k}x^n y^k.$$
Fixing $y$, we can then seek the $x$-inversion $\Rev_x(x G(x,y)$ of the generating function $xG(x,y)$. By the \emph{inversion of the Riordan array $(g(x), f(x))$} we shall mean the array given by the expansion of the generating function
$$\frac{1}{x}\Rev_x(x G(x,y).$$ Note that this is often called the ``REVERT'' transform of $G$. Where no confusion will be caused, we will continue to call it the inversion of $G$, or of the corresponding Riordan array. We shall denote this array by $$(g(x), f(x))^!$$ It is a Riordan array only in special cases.
We give two combinatorially important examples.
\begin{example} We consider the Riordan array $\left(\frac{1}{1+x}, -x\right)$ which begins
$$\left(
\begin{array}{cccccc}
 1 & 0 & 0 & 0 & 0 & 0 \\
 -1 & -1 & 0 & 0 & 0 & 0 \\
 1 & 1 & 1 & 0 & 0 & 0 \\
 -1 & -1 & -1 & -1 & 0 & 0 \\
 1 & 1 & 1 & 1 & 1 & 0 \\
 -1 & -1 & -1 & -1 & -1 & -1 \\
\end{array}
\right).$$
To obtain its inversion, we solve the equation
$$\frac{\frac{u}{1+u}}{1-y(-u)}=x,$$ or
$$\frac{u}{(1+u)(1+yu)}=x$$ for $u$. We obtain
$$u=\frac{1-(1+y)x-\sqrt{1-2(1+y)x+(1-y)^2x^2}}{2xy}.$$ Thus the inversion of $G(x,y)$ in this case is given by
$$\frac{u}{x}=\frac{1-(1+y)x-\sqrt{1-2(1+y)x+(1-y)^2x^2}}{2x^2y}.$$
This expands to give the number triangle that begins
$$\left(
\begin{array}{cccccc}
 1 & 0 & 0 & 0 & 0 & 0 \\
 1 & 1 & 0 & 0 & 0 & 0 \\
 1 & 3 & 1 & 0 & 0 & 0 \\
 1 & 6 & 6 & 1 & 0 & 0 \\
 1 & 10 & 20 & 10 & 1 & 0 \\
 1 & 15 & 50 & 50 & 15 & 1 \\
\end{array}
\right).$$
This is the triangle of Narayana numbers $\left(\frac{1}{k+1}\binom{n}{k}\binom{n+1}{k}\right)$ \seqnum{A001263}. Thus we have 
$$\left(
\begin{array}{cccccc}
 1 & 0 & 0 & 0 & 0 & 0 \\
 -1 & -1 & 0 & 0 & 0 & 0 \\
 1 & 1 & 1 & 0 & 0 & 0 \\
 -1 & -1 & -1 & -1 & 0 & 0 \\
 1 & 1 & 1 & 1 & 1 & 0 \\
 -1 & -1 & -1 & -1 & -1 & -1 \\
\end{array}
\right)^!=\left(
\begin{array}{cccccc}
 1 & 0 & 0 & 0 & 0 & 0 \\
 1 & 1 & 0 & 0 & 0 & 0 \\
 1 & 3 & 1 & 0 & 0 & 0 \\
 1 & 6 & 6 & 1 & 0 & 0 \\
 1 & 10 & 20 & 10 & 1 & 0 \\
 1 & 15 & 50 & 50 & 15 & 1 \\
\end{array}
\right).$$
\end{example}
\begin{example} In this second example, we consider the Riordan array $\left(\frac{1}{(1+x)^2},\frac{-x}{1+x}\right)$, which begins
$$\left(
\begin{array}{cccccc}
 1 & 0 & 0 & 0 & 0 & 0 \\
 -2 & -1 & 0 & 0 & 0 & 0 \\
 3 & 3 & 1 & 0 & 0 & 0 \\
 -4 & -6 & -4 & -1 & 0 & 0 \\
 5 & 10 & 10 & 5 & 1 & 0 \\
 -6 & -15 & -20 & -15 & -6 & -1 \\
\end{array}
\right).$$
Apart from signs, this matrix enumerates faces of the $n$-simplex.
In order to find its inversion, we solve for $u$ the equation
$$\frac{\frac{u}{(1+u)^2}}{1-y \frac{-u}{1+u}}=x,$$ or
$$\frac{u}{(1+u)(1+(1+y)u)}=x.$$ Choosing the solution $u$ with $u(0)=0$, we obtain that
$$\frac{u}{x}=\frac{1-(y+2)x-\sqrt{1-2(y+2)x+x^2y^2}}{2(1+y)x^2}.$$
This is the generating function of the number triangle \seqnum{A126216} that begins
$$\left(
\begin{array}{cccccc}
 1 & 0 & 0 & 0 & 0 & 0 \\
 2 & 1 & 0 & 0 & 0 & 0 \\
 5 & 5 & 1 & 0 & 0 & 0 \\
 14 & 21 & 9 & 1 & 0 & 0 \\
 42 & 84 & 56 & 14 & 1 & 0 \\
 132 & 330 & 300 & 120 & 20 & 1 \\
\end{array}
\right).$$
Among many combinatorial interpretations, this matrix enumerates faces of the $n$-associahedron. In the theory of Koszul operads \cite{Loday}, this result follows from the fact that
$$ Trias^!=Tridend.$$
\end{example}

\section{Generalities}
In this section, we gather some facts that will be important in later section. We begin by recalling the definitions of two transforms of importance. The \emph{binomial matrix} is the matrix $\mathbf{B}=\left(\binom{n}{k}\right)_{0 \le k,n \le n}$ \seqnum{A007318}. This is the Riordan array $\left(\frac{1}{1-x}, \frac{x}{1-x}\right)$. It begins
$$\left(
\begin{array}{cccccc}
 1 & 0 & 0 & 0 & 0 & 0 \\
 1 & 1 & 0 & 0 & 0 & 0 \\
 1 & 2 & 1 & 0 & 0 & 0 \\
 1 & 3 & 3 & 1 & 0 & 0 \\
 1 & 4 & 6 & 4 & 1 & 0 \\
 1 & 5 & 10 & 10 & 5 & 1 \\
\end{array}
\right).$$

The \emph{binomial transform} of a sequence (regarded as an infinite vector) or an array is the result of multiplying that vector or matrix by the binomial matrix $\mathbf{B}$. In terms of generating functions, the Fundamental Theorem of Riordan arrays tells us that if such a sequence has generating function $g(x)$, then its binomial transform has generating function $\frac{1}{1-x}g\left(\frac{x}{1-x}\right)$. Similarly for an array with bivariate generating function $G(x,y)$, its ($x$-)binomial transform will have generating function $\frac{1}{1-x}G\left(\frac{x}{1-x},y\right)$.

The \emph{inverse binomial transform} is obtained by multiplying by $\mathbf{B}^{-1}$, which corresponds to the Riordan array $\left(\frac{1}{1+x}, \frac{x}{1+x}\right)$. In terms of generating functions, this means that the inverse binomial transform of the sequence with generating function $g(x)$ has generating function $\frac{1}{1+x} g\left(\frac{x}{1+x}\right)$.

The \emph{invert transform} of a sequence with generating function $g(x) \in \mathcal{F}_0$ is the sequence with generating function $\frac{g(x)}{1-xg(x)}$. More generally, we shall say the $\frac{g(x)}{1- \alpha x g(x)}$ is the invert$(\alpha)$ transform of $g(x)$.

These two transforms are related by the process of inversion.
\begin{proposition} The revert transform of the invert transform of $g(x) \in \mathcal{F}_0$ is the inverse binomial transform of the revert transform of $g(x)$.
\end{proposition}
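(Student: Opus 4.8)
The plan is to reduce everything to the composition law for compositional inverses, applied to the single series $\phi(x)=x\,g(x)$, which lies in $\mathcal{F}_1$ because we assume $g(0)=1$.

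First I would set up the dictionary. Writing $R(g)$ for the revert transform $\frac1x\Rev_x\!\big(xg(x)\big)$ of a series $g\in\mathcal{F}_0$, we have, with $\phi(x)=xg(x)$, the basic identity $x\,R(g)(x)=\bar\phi(x)$. Next, multiplying the invert transform $\frac{g}{1-xg}$ by $x$ turns it into $\frac{\phi}{1-\phi}=\mu(\phi(x))$, where $\mu(t)=\frac{t}{1-t}\in\mathcal{F}_1$ has compositional inverse $\bar\mu(t)=\frac{t}{1+t}$. So forming the revert transform of the invert transform amounts to reverting the series $\mu\circ\phi$. (Along the way one checks that $\phi$, $\mu\circ\phi$ and $\frac{g}{1-xg}$ all lie in the subsets of $\mathcal{F}$ for which these operations are defined; this is immediate from $g(0)=1$.)

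The one substantive step is the composition law $\overline{\mu\circ\phi}=\bar\phi\circ\bar\mu$, which follows at once from $\mu\big(\phi(\overline{\mu\circ\phi}(x))\big)=x$. It gives
$$x\,R\!\left(\tfrac{g}{1-xg}\right)(x)=\overline{\mu\circ\phi}(x)=\bar\phi\!\left(\tfrac{x}{1+x}\right).$$
On the other hand, by the description of the inverse binomial transform recalled above, the inverse binomial transform of $R(g)$ has generating function $\frac{1}{1+x}\,R(g)\!\left(\frac{x}{1+x}\right)$; substituting $x\,R(g)(x)=\bar\phi(x)$ and cancelling the factor $\frac{x}{1+x}$ rewrites this as $\frac1x\,\bar\phi\!\left(\frac{x}{1+x}\right)$ as well. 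Comparing the two displays proves the claim.

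I do not expect a genuine obstacle here: the argument is essentially bookkeeping with the $\frac1x$ factors and the substitution $x\mapsto\frac{x}{1+x}$, together with the routine membership checks mentioned above. The composition law for $\Rev$ is the only non-formal ingredient, and it is classical; it could alternatively be replaced by a direct Lagrange-inversion computation of a few coefficients if one wanted to verify rather than prove.
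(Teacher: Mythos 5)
Your proposal is correct and follows essentially the same route as the paper: both set $\phi(x)=xg(x)$ (the paper's $v$), use the composition law $\overline{\mu\circ\phi}=\bar\phi\circ\bar\mu$ with $\mu(t)=\frac{t}{1-t}$ and $\bar\mu(t)=\frac{t}{1+t}$, and reduce each side to $\frac{1}{x}\bar\phi\left(\frac{x}{1+x}\right)$. The only cosmetic difference is that you compute both transforms and meet in the middle, whereas the paper runs a single chain of equalities from the inverse binomial transform of the revert transform to the revert transform of the invert transform.
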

\begin{proof} We recall that the revert transform of $g(x)$ is given by
$$\frac{1}{x} \Rev(x g(x)).$$
We let $v(x)=xg(x)$ and we write $\bar{v}(x)=\Rev(xg(x))$.
Then the inverse binomial transform of the revert transform of $g(x)$ is given by
$$\frac{1}{1+x}\left(\frac{1}{x}\bar{v}\right)\left(\frac{x}{1+x}\right).$$
We then have
\begin{align*}
\frac{1}{1+x}\left(\frac{1}{x}\bar{v}\right)\left(\frac{x}{1+x}\right)&=\frac{1}{1+x}\frac{1}{\frac{x}{1+x}}\bar{v}\left(\frac{x}{1+x}\right)\\
&=\frac{1}{x} \bar{v}\left(\frac{x}{1+x}\right)\\
&=\frac{1}{x} \bar{v} \circ \left(\frac{x}{1+x}\right)(x)\\
&=\frac{1}{x} \bar{v} \circ \overline{\frac{x}{1-x}}(x)\\
&= \frac{1}{x} \overline{\frac{x}{1-x} \circ v}(x)\\
&=\frac{1}{x} \Rev\left(\frac{v(x)}{1-v(x)}\right)\\
&=\frac{1}{x} \Rev\left(\frac{xg(x)}{1-xg(x)}\right).\end{align*}
\end{proof}

We can extend the notion of invert transform to Riordan arrays by operating on the $x$-variable. Thus the invert transform of the Riordan array is the array with generating function
$$\frac{G(x,y)}{1-x G(x,y)},\quad \text{where} \quad G(x,y)=\frac{g(x)}{1-yf(x)}.$$ We have the following result.
\begin{proposition} The invert transform of the Riordan array $(g(x), f(x))$ is the Riordan array
$$ \left(\frac{g(x)}{1-xg(x)}, \frac{f(x)}{1-x g(x)}\right).$$
\end{proposition}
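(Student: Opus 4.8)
The plan is to compute the bivariate generating function of the invert transform directly from its definition and then recognize the answer. By definition, the invert transform of $(g(x),f(x))$ is the array with bivariate generating function $\dfrac{G(x,y)}{1-xG(x,y)}$, where $G(x,y)=\dfrac{g(x)}{1-yf(x)}$. Substituting and clearing the inner denominator $1-yf(x)$, I would get
$$\frac{G(x,y)}{1-xG(x,y)}=\frac{\dfrac{g(x)}{1-yf(x)}}{1-\dfrac{xg(x)}{1-yf(x)}}=\frac{g(x)}{1-yf(x)-xg(x)}.$$

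Next I would write the candidate array $\left(\dfrac{g(x)}{1-xg(x)},\dfrac{f(x)}{1-xg(x)}\right)$ in bivariate form. Using the definition of $G(x,y)$ attached to a Riordan array, and then multiplying numerator and denominator by $1-xg(x)$, its generating function is
$$\frac{\dfrac{g(x)}{1-xg(x)}}{1-y\,\dfrac{f(x)}{1-xg(x)}}=\frac{g(x)}{1-xg(x)-yf(x)}.$$
This is the same rational function obtained in the previous step, so the two arrays have identical bivariate generating functions, hence identical entries. Finally, to justify the word ``Riordan array'' in the statement, I would check the membership conditions: since $g(0)=1$, the series $1-xg(x)$ has constant term $1$, so $\dfrac{g(x)}{1-xg(x)}\in\mathcal{F}_0$; and since $\dfrac{1}{1-xg(x)}=1+xg(x)+\cdots$ also has constant term $1$, the product $\dfrac{f(x)}{1-xg(x)}$ has the same lowest-order term $f'(0)x=x$ as $f(x)$, so it lies in $\mathcal{F}_1$. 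Thus the pair genuinely defines a Riordan array.

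I do not expect a real obstacle here: the argument is a short manipulation of rational functions followed by a routine check that the resulting pair lies in $\mathcal{F}_0\times\mathcal{F}_1$. The only point needing a little care is bookkeeping of the two variables — the invert transform operates only on the $x$-variable, while $y$ is carried along unchanged as the column parameter throughout both computations; keeping that straight is what makes the two generating functions line up on the nose.
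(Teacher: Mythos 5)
Your proposal is correct and follows essentially the same route as the paper: compute the bivariate generating function of the invert transform, compute that of the candidate pair, and observe that both equal $\frac{g(x)}{1-xg(x)-yf(x)}$. The only addition is your closing check that the pair lies in $\mathcal{F}_0\times\mathcal{F}_1$, which the paper omits but which is a harmless (and correct) extra verification.
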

\begin{proof} The Riordan array $(g(x), f(x))$ has bivariate generating function $\frac{g(x)}{1-yf(x)}$.

The ($x$-)invert transform of this is given by
$$\frac{\frac{g(x)}{1-yf(x)}}{1-x\frac{g(x)}{1-yf(x)}}=\frac{g(x)}{1-x g(x)- y f(x)}.$$
The Riordan array $\left(\frac{g(x)}{1-xg(x)}, \frac{f(x)}{1-x g(x)}\right)$ has a bivariate generating function given by
$$\frac{\frac{g(x)}{1-xg(x)}}{1-y\frac{f(x)}{1-x g(x)}}=\frac{g(x)}{1-x g(x)- y f(x)}.$$
\end{proof}
More generally, the invert$(\alpha)$ transform of the Riordan array $(g(x), f(x))$ is the Riordan array
$$ \left(\frac{g(x)}{1-\alpha xg(x)}, \frac{f(x)}{1-\alpha x g(x)}\right).$$
\begin{example} We consider the Riordan array $\left(\frac{1}{1+x},-x\right)$ of our first example. The invert transform of this array is given by
$$\left(\frac{\frac{1}{1+x}}{1-x \frac{1}{1+x}}, \frac{-x}{1-x \frac{1}{1+x}}\right)=(1, -x(1+x)).$$
This is the array $\left((-1)^k \binom{k}{n-k}\right)$, which begins
$$\left(
\begin{array}{cccccc}
 1 & 0 & 0 & 0 & 0 & 0 \\
 0 & -1 & 0 & 0 & 0 & 0 \\
 0 & -1 & 1 & 0 & 0 & 0 \\
 0 & 0 & 2 & -1 & 0 & 0 \\
 0 & 0 & 1 & -3 & 1 & 0 \\
 0 & 0 & 0 & -3 & 4 & -1 \\
\end{array}
\right).$$
To obtain its inversion, we solve the equation
$$\frac{u}{1+yu(1+u)}=x.$$ We obtain that
$$\frac{u}{x}=\frac{1-xy-\sqrt{1-2xy+(y-4)x^2y}}{2x^2y}.$$
This expands to give the array that begins
$$\left(
\begin{array}{cccccc}
 1 & 0 & 0 & 0 & 0 & 0 \\
 0 & 1 & 0 & 0 & 0 & 0 \\
 0 & 1 & 1 & 0 & 0 & 0 \\
 0 & 0 & 3 & 1 & 0 & 0 \\
 0 & 0 & 2 & 6 & 1 & 0 \\
 0 & 0 & 0 & 10 & 10 & 1 \\
\end{array}
\right).$$ This is the array $\left(\binom{n}{2(n-k)}C_{n-k}\right)$.
We now note that we have
$$\left(
\begin{array}{cccccc}
 1 & 0 & 0 & 0 & 0 & 0 \\
 1 & 1 & 0 & 0 & 0 & 0 \\
 1 & 2 & 1 & 0 & 0 & 0 \\
 1 & 3 & 3 & 1 & 0 & 0 \\
 1 & 4 & 6 & 4 & 1 & 0 \\
 1 & 5 & 10 & 10 & 5 & 1 \\
\end{array}
\right)\cdot \left(
\begin{array}{cccccc}
 1 & 0 & 0 & 0 & 0 & 0 \\
 0 & 1 & 0 & 0 & 0 & 0 \\
 0 & 1 & 1 & 0 & 0 & 0 \\
 0 & 0 & 3 & 1 & 0 & 0 \\
 0 & 0 & 2 & 6 & 1 & 0 \\
 0 & 0 & 0 & 10 & 10 & 1 \\
\end{array}
\right)
=\left(
\begin{array}{cccccc}
 1 & 0 & 0 & 0 & 0 & 0 \\
 1 & 1 & 0 & 0 & 0 & 0 \\
 1 & 3 & 1 & 0 & 0 & 0 \\
 1 & 6 & 6 & 1 & 0 & 0 \\
 1 & 10 & 20 & 10 & 1 & 0 \\
 1 & 15 & 50 & 50 & 15 & 1 \\
\end{array}
\right).$$
\end{example}
The generating function of the reversal of an array with generating function $G(x,y)$ is given by $G\left(xy,\frac{1}{y}\right)$. We can deduce from this that the inversion of the reversal of an array is the reversal of the inversion of the original array.
Thus the inversion of the array that begins
$$\left(
\begin{array}{cccccc}
 1 & 0 & 0 & 0 & 0 & 0 \\
 -1 & -2 & 0 & 0 & 0 & 0 \\
 1 & 3 & 3 & 0 & 0 & 0 \\
 -1 & -4 & -6 & -4 & 0 & 0 \\
 1 & 5 & 10 & 10 & 5 & 0 \\
 -1 & -6 & -15 & -20 & -15 & -6 \\
\end{array}
\right)$$ is given by the array that begins
$$\left(
\begin{array}{cccccc}
 1 & 0 & 0 & 0 & 0 & 0 \\
 1 & 2 & 0 & 0 & 0 & 0 \\
 1 & 5 & 5 & 0 & 0 & 0 \\
 1 & 9 & 21 & 14 & 0 & 0 \\
 1 & 14 & 56 & 84 & 42 & 0 \\
 1 & 20 & 120 & 300 & 330 & 132 \\
\end{array}
\right).$$
The initial column of the array with generating function $G(x,y)$ has generating function $G(x,0)$. The initial column of the inversion of the array is then the revert transform of this original initial column. Likewise, the row sums of the array with generating function $G(x,y)$ have generating function $G(x,1)$. We then have that the row sums of the inversion of an array are given by the revert transform of the row sums of the original array.
\begin{example} The revert transform of the sequence
$$1,-2,3,-4,5,-6,\ldots$$ is the sequence
$$1,2,5,14,42,132,\ldots.$$
The row sums of the matrix that begins
$$\left(
\begin{array}{cccccc}
 1 & 0 & 0 & 0 & 0 & 0 \\
 -2 & -1 & 0 & 0 & 0 & 0 \\
 3 & 3 & 1 & 0 & 0 & 0 \\
 -4 & -6 & -4 & -1 & 0 & 0 \\
 5 & 10 & 10 & 5 & 1 & 0 \\
 -6 & -15 & -20 & -15 & -6 & -1 \\
\end{array}
\right)$$ begin
$$1, -3, 7, -15, 31, -63, 127,\ldots.$$
The revert transform of this sequence, which begins
$$1, 3, 11, 45, 197, 903,\ldots$$ is then given by the row sums of the array that begins
$$\left(
\begin{array}{cccccc}
 1 & 0 & 0 & 0 & 0 & 0 \\
 2 & 1 & 0 & 0 & 0 & 0 \\
 5 & 5 & 1 & 0 & 0 & 0 \\
 14 & 21 & 9 & 1 & 0 & 0 \\
 42 & 84 & 56 & 14 & 1 & 0 \\
 132 & 330 & 300 & 120 & 20 & 1 \\
\end{array}\right).$$
\end{example}

As the inversion process is involutive, we note that the inversion of the inversion of an array is the original array.

Of importance in the sequel will be the technique of Lagrange inversion \cite{Henrici, When}. The form that we shall use is the following.  We have

$$[x^n] H(\bar{f})= \frac{1}{n}[x^{n-1}]H'(x)\left(\frac{x}{f}\right)^n,$$
where $H(x) \in \mathbf{R}[[x]]$.

\section{Main results}
We first find a general expression for the $(n,k)$-th term of the inversion $(g(x),f(x))^!$ of the Riordan array $(g(x), f(x))$. \begin{lemma} The $(n,k)$-th term $\hat{t}_{n,k}$ of the inversion of the Riordan array $(g(x), f(x))$ is given by
$$\hat{t}_{n,k}=\frac{(-1)^k}{n+1}\binom{n+1}{k}[x^n]f(x)^k \left(\frac{1}{g(x)}\right)^{n+1}.$$
\end{lemma}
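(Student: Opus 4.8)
The plan is to extract the coefficient $\hat{t}_{n,k}$ directly from the defining generating function of the inversion via Lagrange inversion. Recall that $(g,f)^!$ is obtained by setting $v(x) = x\,G(x,y) = \frac{xg(x)}{1-yf(x)}$, computing $\bar v = \Rev_x(v)$, and expanding $\frac{1}{x}\bar v(x,y)$. So $\hat t_{n,k} = [x^n y^k]\,\frac{1}{x}\bar v(x,y)$, i.e. $\hat t_{n,k} = [x^{n+1} y^k]\,\bar v(x,y)$.

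First I would apply the stated form of Lagrange inversion with $H(x) = x$ to the series $v(x) = x/\phi(x)$ where $\phi(x) = \frac{1-yf(x)}{g(x)}$ (treating $y$ as a parameter; note $\phi \in \mathcal{F}_0$ since $f(0)=0$, $g(0)=1$). This gives
\begin{align*}
[x^{n+1}]\bar v(x) &= \frac{1}{n+1}[x^n]\left(\frac{x}{v(x)}\right)^{n+1} = \frac{1}{n+1}[x^n]\phi(x)^{n+1}\\
&= \frac{1}{n+1}[x^n]\left(\frac{1-yf(x)}{g(x)}\right)^{n+1}.
\end{align*}
Next I would expand $(1-yf(x))^{n+1}$ by the binomial theorem as $\sum_{k} \binom{n+1}{k}(-1)^k y^k f(x)^k$ and pick off the coefficient of $y^k$, which yields exactly
$$\hat t_{n,k} = [x^{n+1}y^k]\,\bar v = \frac{(-1)^k}{n+1}\binom{n+1}{k}[x^n]f(x)^k\left(\frac{1}{g(x)}\right)^{n+1},$$
as claimed.

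**The main obstacle** — really the only point needing care — is justifying the use of Lagrange inversion with $y$ present as a formal parameter, and confirming that $v(x,y)\in\mathcal{F}_1$ in the variable $x$ so that $\Rev_x$ is well defined: indeed $v(x,y) = xg(x)(1 + yf(x) + \cdots)$ has $x$-coefficient $g(0) = 1 \ne 0$, so $\bar v$ exists as a power series in $x$ with coefficients in $\mathbf{R}[[y]]$ (or even in $\mathbf{R}[y]$ for each fixed power of $x$, since $f(0)=0$ keeps the $y$-degree bounded), and the Lagrange formula applies coefficient-wise in $y$. I should also double-check the bookkeeping of the $\frac{1}{x}$ shift: the inversion array has generating function $\frac{1}{x}\bar v$, so its $(n,k)$ entry is the coefficient of $x^n y^k$ in $\frac{1}{x}\bar v$, equivalently the coefficient of $x^{n+1}y^k$ in $\bar v$, which matches the index $n+1$ appearing throughout the formula. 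Once these formalities are in place, the computation is the short binomial expansion above.
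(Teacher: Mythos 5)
Your proof is correct and follows essentially the same route as the paper: apply Lagrange inversion (with $H(x)=x$) to $\bar v=\Rev_x\bigl(\tfrac{xg(x)}{1-yf(x)}\bigr)$ to get $\tfrac{1}{n+1}[x^n]\bigl(\tfrac{1-yf(x)}{g(x)}\bigr)^{n+1}$, then expand by the binomial theorem and read off the coefficient of $y^k$. Your added remarks on why $\Rev_x$ is well defined with $y$ as a formal parameter are a sound supplement to what the paper leaves implicit.
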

\begin{proof}
Using Lagrange inversion, we find that
\begin{align*}
[x^n y^k]\frac{1}{x} \Rev\left(\frac{xg(x)}{1-yf(x)}\right)&=
[x^{n+1}y^k] \Rev\left(\frac{xg(x)}{1-yf(x)}\right)\\
&=\frac{1}{n+1}[x^n y^k]\left(\frac{1-yf(x)}{g(x)}\right)^{n+1}\\
&=\frac{1}{n+1}[x^n y^k]\sum_{j=0}^{n+1}\binom{n+1}{j}(-1)^j y^j f(x)^j \left(\frac{1}{g(x)}\right)^{n+1}\\
&=\frac{1}{n+1}[x^n] \binom{n+1}{k}f(x)^k \left(\frac{1}{g(x)}\right)^{n+1}\\
&=\frac{(-1)^k}{n+1}\binom{n+1}{k}[x^n] f(x)^k \left(\frac{1}{g(x)}\right)^{n+1}.\end{align*}
\end{proof}
\begin{proposition} The $(n,k)$-th term $\hat{t}_{n,k}$ of the inversion of the Riordan array $(g(x), f(x))$ is given by
$$\hat{t}_{n,k}=\frac{(-1)^k}{n+1}\binom{n+1}{k} \left(\left(\Rev(xg(x))\right)', f(\Rev(xg(x)))\right)_{n,k}.$$
\end{proposition}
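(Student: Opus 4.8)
The plan is to derive the Proposition directly from the Lemma together with one auxiliary form of Lagrange inversion. The Lemma already gives
\[
\hat{t}_{n,k}=\frac{(-1)^k}{n+1}\binom{n+1}{k}[x^n]f(x)^k \left(\frac{1}{g(x)}\right)^{n+1},
\]
and, by the defining formula $(G,F)_{n,k}=[x^n]G(x)F(x)^k$ for the $(n,k)$-entry of a Riordan array, what remains to be shown is precisely the coefficient identity
\[
[x^n]f(x)^k \left(\frac{1}{g(x)}\right)^{n+1}
=[x^n]\bigl(\Rev(xg(x))\bigr)'(x)\; f\bigl(\Rev(xg(x))\bigr)^k .
\]
So the whole statement collapses to this one identity.

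To prove it, I would first fix notation: since $g(0)=1$, the series $v(x):=xg(x)$ lies in $\mathcal{F}_1$, hence has a compositional inverse $w:=\bar v=\Rev(xg(x))\in\mathcal{F}_1$, and I set $\psi(x):=x/v(x)=1/g(x)\in\mathcal{F}_0$. With this notation the right-hand side above is $[x^n]\bigl(f(w(x))^k\,w'(x)\bigr)$ and the left-hand side is $[x^n]\bigl(f(x)^k\,\psi(x)^{n+1}\bigr)$, so it suffices to establish the auxiliary identity
\[
[x^n]\bigl(H(w(x))\,w'(x)\bigr)=[x^n]\bigl(H(x)\,\psi(x)^{n+1}\bigr)\qquad (H\in\mathbf{R}[[x]])
\]
and then specialize to $H(x)=f(x)^k$. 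For the auxiliary identity I would take the antiderivative $\widetilde{H}$ of $H$ with $\widetilde{H}(0)=0$, observe that $H(w)\,w'=\frac{d}{dx}\bigl(\widetilde{H}\circ w\bigr)$, whence $[x^n]\bigl(H(w)w'\bigr)=(n+1)[x^{n+1}]\widetilde{H}(w(x))$, and then apply the Lagrange inversion formula recalled above --- with the generic series there taken to be $v$ and the test function there taken to be $\widetilde{H}$, at index $n+1$ --- to obtain
\[
[x^{n+1}]\widetilde{H}(w(x))=\frac{1}{n+1}[x^{n}]\widetilde{H}'(x)\left(\frac{x}{v(x)}\right)^{n+1}=\frac{1}{n+1}[x^{n}]H(x)\,\psi(x)^{n+1}.
\]
Multiplying by $n+1$ yields the auxiliary identity, and taking $H=f^k$ then yields the coefficient identity, completing the argument.

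I do not expect a serious obstacle here: the real content is the recognition that the $n$-dependent coefficient extraction produced by the Lemma is exactly the ``revert-and-differentiate'' shape of the auxiliary identity, and once that identity is isolated everything reduces to a single application of the Lagrange inversion already quoted. The only points needing a word of care are bookkeeping ones. The pair $\bigl((\Rev(xg(x)))',\,f(\Rev(xg(x)))\bigr)$ is not a Riordan array in the strict sense, since $f(\Rev(xg(x)))\in\mathcal{F}_0$ rather than $\mathcal{F}_1$; but the symbol $(G,F)_{n,k}$ is used here only as shorthand for $[x^n]GF^k$, which is all that is needed. Also the formal antiderivative $\widetilde{H}$ involves division by positive integers, so the auxiliary identity is first proved over $\mathbb{Q}$ and then holds over any admissible $\mathbf{R}$, both sides being polynomial expressions in the coefficients of $g$ and $f$.
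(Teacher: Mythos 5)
Your proof is correct and follows essentially the same route as the paper: both reduce the claim (via the Lemma) to the coefficient identity $[x^n]f^k(1/g)^{n+1}=[x^n]\,(\Rev(xg))'\,f(\Rev(xg))^k$, introduce an antiderivative of $f^k$ as the test function in the quoted Lagrange inversion formula, and finish with the derivative/coefficient-shift identity. Your write-up is in fact slightly cleaner on the index bookkeeping (working directly at exponent $n+1$ rather than shifting $n\mapsto n+1$ at the end) and on the remark that the resulting pair is only a Riordan array in the loose sense.
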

\begin{proof} Using Lagrange inversion, we have the following chain of equalities.
\begin{align*}
[x^{n-1}]f(x)^k\left(\frac{1}{g(x)}\right)^n&=n.\frac{1}{n}[x^{n-1}] f(x)^k\left(\frac{x}{xg(x)}\right)^n\\
&=n.\frac{1}{n}[x^{n-1}]H'(x)\left(\frac{x}{xg(x)}\right)^n \quad \text{where }\,H'(x)=f(x)^k\\
&=n [x^n] H(\Rev(xg(x))\\
&=[x^{n-1}] \frac{d}{dx} H(\Rev(xg(x)))\\
&=[x^{n-1}] H'(\Rev(xg(x))).(\Rev(xg(x)))'\\
&=[x^{n-1}] (f(\Rev(xg(x))))^k.(\Rev(xg(x)))' \end{align*}
Thus we have
$$[x^n]f(x)^n \left(\frac{1}{g(x)}\right)^{n+1}=[x^n](\Rev(xg(x))'.(f(\Rev(xg(x))))^k=\left((\Rev(xg(x)))', f(\Rev(xg(x))\right)_{n,k}.$$
\end{proof}
Notice that we have the factorization
$$\left((\Rev(xg(x)))', f(\Rev(xg(x))\right)=\left((\Rev(xg(x)))', \Rev(xg(x)\right)\cdot (1, f(x)).$$

We now look at the consequences of this result for three particular subgroups of the Riordan group. These are
\begin{enumerate}
\item The \emph{Appell subgroup} of Riordan arrays of the form $(g(x),x)$.
\item The \emph{associated or Lagrange subgroup} of Riordan arrays of the form $(1, f(x))$.
\item The \emph{Bell subgroup} of Riordan arrays of the form $(g(x), xg(x))$.
\end{enumerate}
We note that Riordan arrays of the form $(f', f)$ constitute the \emph{derivative subgroup} of the Riordan group. Thus the Riordan array $\left((\Rev(xg(x)))', f(\Rev(xg(x))\right)$ is a product of an element of the derivative subgroup times an element of the associated group. 
\begin{corollary} The inversion of the Appell array $(g(x), x)$ has its general $(n,k)$-th term given by
$$\hat{t}_{n,k}=\frac{(-1)^k}{n+1}\binom{n+1}{k} \left(\left(\Rev(xg(x))\right)', \Rev(xg(x))\right)_{n,k}.$$
We have
$$\left(\left(\Rev(xg(x))\right)', \Rev(xg(x))\right)=\left((xg(x))', xg(x)\right)^{-1}.$$
\end{corollary}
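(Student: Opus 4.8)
The plan is to treat the two displayed identities separately. For the first, I would simply specialize the preceding Proposition to the case $f(x)=x$. Since $f(\Rev(xg(x)))=\Rev(xg(x))$ when $f$ is the identity, the general formula
$$\hat{t}_{n,k}=\frac{(-1)^k}{n+1}\binom{n+1}{k}\left((\Rev(xg(x)))',\,f(\Rev(xg(x)))\right)_{n,k}$$
collapses immediately to the asserted expression. No work beyond substitution is required here.

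For the second identity, set $v(x)=xg(x)$. Because $g \in \mathcal{F}_0$ with $g(0)=1$, we have $v \in \mathcal{F}_1$ with $v'(0)=1$, so $v' \in \mathcal{F}_0$ and $(v',v)$ is a genuine Riordan array (an element of the derivative subgroup); likewise $\bar{v}=\Rev(xg(x)) \in \mathcal{F}_1$, so $\bar{v}' \in \mathcal{F}_0$ and $(\bar{v}',\bar{v})$ is also a Riordan array. I would then apply the Riordan inversion formula $(g,f)^{-1}=(1/g(\bar{f}),\bar{f})$ with $(g,f)=(v',v)$, which gives
$$(v',v)^{-1}=\left(\frac{1}{v'(\bar{v})},\,\bar{v}\right).$$
It remains to identify $\bar{v}'$ with $1/v'(\bar{v})$. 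This is the elementary inverse-function rule: differentiating the identity $v(\bar{v}(x))=x$ gives $v'(\bar{v}(x))\,\bar{v}'(x)=1$, hence $\bar{v}'(x)=1/v'(\bar{v}(x))$. Substituting back yields $(v',v)^{-1}=(\bar{v}',\bar{v})$, which is exactly the claimed equality $\left((\Rev(xg(x)))',\Rev(xg(x))\right)=\left((xg(x))',xg(x)\right)^{-1}$.

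Since both steps are short, I do not anticipate a genuine obstacle; the only point meriting care is the bookkeeping that ensures every object in sight really is a Riordan array — that is, checking $v' \in \mathcal{F}_0$ (which follows from $v'(0)=g(0)=1$) and that $v$ is composable (which follows from $v \in \mathcal{F}_1$). It is also worth remarking that the same argument shows the derivative subgroup is closed under inversion, with $(v',v)^{-1}=(\bar{v}',\bar{v})$, a fact of independent interest that is presumably reused in the sequel.
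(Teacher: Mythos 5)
Your proposal is correct and matches the paper's (implicit) reasoning: the paper states this corollary without proof, as an immediate specialization of the preceding proposition to $f(x)=x$, and the second identity is exactly the standard computation you give, combining the Riordan inverse formula $(g,f)^{-1}=\left(\frac{1}{g(\bar{f})},\bar{f}\right)$ with the inverse-function rule $\bar{v}'(x)=1/v'(\bar{v}(x))$ to show the derivative subgroup is closed under inversion. Nothing is missing.
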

\begin{corollary} The $(n,k)$-th element of the inversion of the Lagrange array $(1, f(x))=(t_{n,k})$ is given by
$$\hat{t}_{n,k}=\frac{(-1)^k}{n+1} \binom{n+1}{k}t_{n,k}.$$
\end{corollary}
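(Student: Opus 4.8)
The plan is to obtain this as an immediate specialization of the Lemma (equivalently, of the Proposition) to the case $g(x)=1$. First I would record that for the Lagrange array $(1,f(x))$ the general entry is $t_{n,k}=[x^n]\,1\cdot f(x)^k=[x^n]f(x)^k$, which is nothing but the Riordan-array definition $t_{n,k}=[x^n]u(x)v(x)^k$ with $u(x)=1$ and $v(x)=f(x)$.

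Next I would substitute $g(x)=1$ into the formula of the Lemma,
$$\hat{t}_{n,k}=\frac{(-1)^k}{n+1}\binom{n+1}{k}[x^n]f(x)^k\left(\frac{1}{g(x)}\right)^{n+1}.$$
Since $\left(\tfrac{1}{g(x)}\right)^{n+1}=1$ when $g(x)=1$, the bracketed coefficient collapses to $[x^n]f(x)^k=t_{n,k}$, and the claimed identity follows at once.

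Alternatively — and this is the more conceptual route — I would use the Proposition together with the displayed factorization $\left((\Rev(xg(x)))', f(\Rev(xg(x)))\right)=\left((\Rev(xg(x)))',\Rev(xg(x))\right)\cdot(1,f(x))$. When $g(x)=1$ we have $xg(x)=x$, hence $\Rev(xg(x))=x$ and $(\Rev(xg(x)))'=1$, so the derivative-subgroup factor is the identity $(1,x)$ and $f(\Rev(xg(x)))=f(x)$; thus the Riordan array $\left((\Rev(xg(x)))', f(\Rev(xg(x)))\right)$ is exactly the original array $(1,f(x))=(t_{n,k})$, and the Proposition reads $\hat{t}_{n,k}=\frac{(-1)^k}{n+1}\binom{n+1}{k}t_{n,k}$, as desired. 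There is essentially no obstacle here: the only points to verify are that $\Rev$ fixes $x$ and that the Riordan product in the factorization reduces correctly, both routine; the substance of the statement is entirely carried by the Lemma and Proposition already established.
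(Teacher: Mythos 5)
Your proposal is correct, and your second (conceptual) route is exactly the paper's own justification: the paper notes that $g(x)=1$ forces $((xg(x))',xg(x))=(1,x)$, so its inverse $\left((\Rev(xg(x)))',\Rev(xg(x))\right)$ is the identity and the factorization leaves just $(1,f(x))$. Your first route, substituting $g(x)=1$ directly into the Lemma, is an equally immediate and equivalent shortcut, so there is nothing to add.
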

This follows since $g(x)=1$ means that the array $((xg(x))', xg(x))=(1,x)$ and so its inverse matrix is also the identity matrix.
In order to characterize the inversion of a Bell matrix, we need to review the definition of an exponential Riordan array. Such an array $[u, v]$ is given by two power series $(u, v)$ drawn respectively from
$$\mathcal{F}_0^e=\{ u_0+u_1 \frac{x}{1!}+ u_2 \frac{x^2}{2!}+ \cdots \,| u_i \in \mathbf{R}, u_0 \ne 0\},$$ and
$$\mathcal{F}_1^e=\{ v_1 \frac{x}{1!}+ v_2 \frac{x^2}{2!}+ \cdots \,| v_i \in \mathbf{R}, v_1 \ne 0\}.$$
The $(n,k)$-th element of the exponential Riordan array $[u, v]$ is then given by
$$\frac{n!}{k!}[x^n] u(x)v(x)^k.$$
Given an ordinary power series $g(x) \in \mathcal{F}_0$, with $g(x)=a_0+a_1 x + a_2x^2+\cdots$, we shall write
$$g_e(x)=a_0+a_1 \frac{x}{1!} + a_2 \frac{x^2}{2!} + \cdots.$$  Thus $g_e(x) \in \mathcal{F}_0^e$. (We have $g_e(t)=\mathcal{L}^{-1}\left\{\frac{1}{s}g\left(\frac{1}{s}\right)\right\}(t)$, where $\mathcal{L}$ is the Laplace transform).
\begin{proposition}
The inversion of the Bell matrix $(g(x), xg(x))$ is given by the exponential Riordan array
$$ \left[\left(\frac{1}{x}\Rev(xg(x))\right)_e, -x\right].$$
\end{proposition}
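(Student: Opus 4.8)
The plan is to evaluate $\hat{t}_{n,k}$ directly from the Lemma, specialised to $f(x)=xg(x)$, reduce the resulting coefficient by means of Lagrange inversion, and then recognise the answer by expanding the claimed exponential Riordan array.

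First I would substitute $f(x)=xg(x)$ into the formula of the Lemma. Since $(xg(x))^{k}\bigl(1/g(x)\bigr)^{n+1}=x^{k}g(x)^{k-n-1}$, extracting the coefficient of $x^{n}$ leaves
$$\hat{t}_{n,k}=\frac{(-1)^{k}}{n+1}\binom{n+1}{k}[x^{n-k}]g(x)^{-(n+1-k)}.$$
Next I would dispose of this coefficient using the Lagrange inversion formula recalled above, applied to $v(x)=xg(x)$, whose compositional inverse is $\Rev(xg(x))$. Taking $H(x)=x$ there gives $[x^{\ell}]\Rev(xg(x))=\frac{1}{\ell}[x^{\ell-1}]g(x)^{-\ell}$, and with $\ell=n-k+1$ this reads
$$[x^{n-k}]g(x)^{-(n+1-k)}=(n-k+1)\,[x^{n-k+1}]\Rev(xg(x)).$$

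Now set $r(x):=\frac{1}{x}\Rev(xg(x))=\sum_{j\ge 0}r_{j}x^{j}$; note $r_{0}=1$ because $xg(x)=x+\cdots$, which is exactly what makes $r_{e}\in\mathcal{F}_{0}^{e}$. Since $[x^{n-k+1}]\Rev(xg(x))=r_{n-k}$, combining the two displays with the elementary identity $\frac{n-k+1}{n+1}\binom{n+1}{k}=\binom{n}{k}$ yields
$$\hat{t}_{n,k}=(-1)^{k}\binom{n}{k}r_{n-k}.$$
It then remains to check that this is precisely the $(n,k)$-th entry of the exponential Riordan array $[\,r_{e},-x\,]$. By the definition of such an array, that entry is $\frac{n!}{k!}[x^{n}]r_{e}(x)(-x)^{k}=(-1)^{k}\frac{n!}{k!}[x^{n-k}]r_{e}(x)=(-1)^{k}\frac{n!}{k!}\cdot\frac{r_{n-k}}{(n-k)!}=(-1)^{k}\binom{n}{k}r_{n-k}$, which matches $\hat{t}_{n,k}$.

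There is no serious obstruction here; the only points requiring care are the index bookkeeping in the Lagrange step (including the boundary case $n=k$, where both sides reduce to $1$ because $g(0)=1$ forces $r_{0}=1$, and the case $k=n+1$, where both sides vanish) together with the binomial simplification. As an alternative route one could start from the preceding Proposition instead: for $f(x)=xg(x)$ one has $f(\Rev(xg(x)))=\Rev(xg(x))\,g(\Rev(xg(x)))=x$ by the defining relation of the compositional inverse, so the Riordan array occurring there collapses to the Appell array $\bigl((\Rev(xg(x)))',x\bigr)$, whose $(n,k)$-entry is $(n-k+1)r_{n-k}$, and one again arrives at the same formula for $\hat{t}_{n,k}$.
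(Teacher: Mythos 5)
Your proof is correct and follows essentially the same route as the paper's: both establish the identity of the two $(n,k)$-entries via Lagrange inversion applied to $v(x)=xg(x)$ with $H(x)=x$ together with the binomial simplification $\frac{n-k+1}{n+1}\binom{n+1}{k}=\binom{n}{k}$, the only difference being that you run the chain of equalities from the Lemma's formula toward the exponential array while the paper runs it in the opposite direction.
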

The expression $\frac{1}{x}\Rev(xg(x))$ is the revert transform of $g(x)$, and hence we can also write this as
$$\left[(\revert(g(x))_e, -x\right].$$
\begin{proof}
The $(n,k)$-th element of the exponential Riordan array $\left[\left(\frac{1}{x}\Rev(xg(x))\right)_e, -x\right]$ is given by
$$\frac{n!}{k!} [x^n] \left(\frac{1}{x}\Rev(xg)\right)_e (-x)^k.$$ We then have
\begin{align*}
\frac{n!}{k!} [x^n] \left(\frac{1}{x}\Rev(xg)\right)_e (-x)^k&=(-1)^k\frac{n!}{k!}[x^{n-k}] \left(\frac{1}{x}\Rev(xg)\right)_e\\
&=(-1)^k\frac{n!}{k!} [x^{n-k}] \sum_{j=0}^{\infty} [x^j]\left(\frac{1}{x}\Rev(xg)\right)\frac{x^j}{j!}\\
&=(-1)^k\frac{n!}{k!} \frac{1}{(n-k)!} [x^{n-k}] \left(\frac{1}{x}\Rev(xg)\right)\\
&= (-1)^k\binom{n}{k} [x^{n-k}] \left(\frac{1}{x}\Rev(xg)\right)\\
&= (-1)^k\binom{n}{k} [x^{n-k+1}] \Rev(xg)\\
&= (-1)^k \binom{n}{k} \frac{1}{n-k+1} [x^{n-k}] \left(\frac{1}{g(x)}\right)^{n-k+1}\\
&= \frac{(-1)^k}{n+1} \binom{n+1}{k} [x^n]x^k g(x)^k \left(\frac{1}{g(x)}\right)^{n+1}\\
&= \frac{(-1)^k}{n+1} \binom{n+1}{k} [x^n](xg(x))^k \left(\frac{1}{g(x)}\right)^{n+1}.\end{align*}
This last expression is the $(n,k)$-th element of the inversion of $(g(x), xg(x))$.
\end{proof}
This is an interesting example of where an ordinary Riordan array is directly linked to an exponential Riordan array. 
\begin{example} \textbf{Chebyshev polynomials of the second kind}. The coefficient array of the scaled Chebyshev polynomials of the second kind $U_n(x/2)$ is given by the Bell matrix $\left(\frac{1}{1+x^2}, \frac{x}{1+x^2}\right)$ \cite{Classical}. This matrix \seqnum{A049310} begins
$$\left(
\begin{array}{ccccccc}
 1 & 0 & 0 & 0 & 0 & 0 & 0 \\
 0 & 1 & 0 & 0 & 0 & 0 & 0 \\
 -1 & 0 & 1 & 0 & 0 & 0 & 0 \\
 0 & -2 & 0 & 1 & 0 & 0 & 0 \\
 1 & 0 & -3 & 0 & 1 & 0 & 0 \\
 0 & 3 & 0 & -4 & 0 & 1 & 0 \\
 -1 & 0 & 6 & 0 & -5 & 0 & 1 \\
\end{array}
\right).$$ The inverse of this matrix is the matrix \seqnum{A053121} that begins 
$$\left(
\begin{array}{ccccccc}
 1 & 0 & 0 & 0 & 0 & 0 & 0 \\
 0 & 1 & 0 & 0 & 0 & 0 & 0 \\
 1 & 0 & 1 & 0 & 0 & 0 & 0 \\
 0 & 2 & 0 & 1 & 0 & 0 & 0 \\
 2 & 0 & 3 & 0 & 1 & 0 & 0 \\
 0 & 5 & 0 & 4 & 0 & 1 & 0 \\
 5 & 0 & 9 & 0 & 5 & 0 & 1 \\
\end{array}
\right).$$ 
By the result above, we have 
$$\left(\frac{1}{1+x^2}, \frac{x}{1+x^2}\right)^{!}=\left[\frac{I_1(2x)}{x}, -x\right].$$ 
This inversion matrix begins 
$$\left(
\begin{array}{ccccccc}
 1 & 0 & 0 & 0 & 0 & 0 & 0 \\
 0 & -1 & 0 & 0 & 0 & 0 & 0 \\
 1 & 0 & 1 & 0 & 0 & 0 & 0 \\
 0 & -3 & 0 & -1 & 0 & 0 & 0 \\
 2 & 0 & 6 & 0 & 1 & 0 & 0 \\
 0 & -10 & 0 & -10 & 0 & -1 & 0 \\
 5 & 0 & 30 & 0 & 15 & 0 & 1 \\
\end{array}
\right).$$
In absolute value, this matrix is \seqnum{A097610}, which counts Motzkin paths of length $n$ with $k$ horizontal steps.
\end{example}
Because of the reversibility of the inversion process, we can also start with an exponential Riordan matrix of the form $[ g_e(x), -x]$ and derive the (ordinary) Bell matrix for which it is the inversion.
\begin{example} We start with the exponential Riordan array $\left[\frac{1}{1-x}, -x\right]$. This matrix begins $$\left(
\begin{array}{cccccc}
 1 & 0 & 0 & 0 & 0 & 0 \\
 1 & -1 & 0 & 0 & 0 & 0 \\
 2 & -2 & 1 & 0 & 0 & 0 \\
 6 & -6 & 3 & -1 & 0 & 0 \\
 24 & -24 & 12 & -4 & 1 & 0 \\
 120 & -120 & 60 & -20 & 5 & -1 \\
\end{array}
\right).$$ 
This is a signed version of \seqnum{A094587}, which counts permutations on $n$ letters with exactly $k+1$ cycles and with the first $k+1$ letters in separate cycles. Reversing our steps from above, we find that the Riordan array for which this is the inversion is the Riordan array
$$\left(\sum_{n=0}^{\infty} n!x^n, x \sum_{n=0}^{\infty} n!x^n\right)^{-1}.$$ 
This array begins 
$$\left(
\begin{array}{cccccc}
 1 & 0 & 0 & 0 & 0 & 0 \\
 -1 & 1 & 0 & 0 & 0 & 0 \\
 0 & -2 & 1 & 0 & 0 & 0 \\
 -1 & 1 & -3 & 1 & 0 & 0 \\
 -4 & -2 & 3 & -4 & 1 & 0 \\
 -22 & -6 & -4 & 6 & -5 & 1 \\
\end{array}
\right).$$ 
Its inverse $\left(\sum_{n=0}^{\infty} n!x^n, x \sum_{n=0}^{\infty} n!x^n\right)$ begins
$$\left(
\begin{array}{cccccc}
 1 & 0 & 0 & 0 & 0 & 0 \\
 1 & 1 & 0 & 0 & 0 & 0 \\
 2 & 2 & 1 & 0 & 0 & 0 \\
 6 & 5 & 3 & 1 & 0 & 0 \\
 24 & 16 & 9 & 4 & 1 & 0 \\
 120 & 64 & 31 & 14 & 5 & 1 \\
\end{array}
\right).$$ 
The closely related Riordan array $\left(1, x \sum_{n=0}^{\infty} n!x^n\right)$ is \seqnum{A084938}. 
The unsigned sequence $$1,1, 0, 1, 4, 22, 144, 1089, 9308, 88562,\ldots$$ counts the number of generators in arity $n$ of the operad \emph{Lie}, when considered as a free non-symmetric operad \cite{Salvatore}.
\end{example}

\section{Duality and Riordan involutions}
We say that a Riordan array $(g(x), f(x))$ is self-dual if we have
$$(g(x), f(x))^! = (g(x), f(x)).$$ 
\begin{example} We consider the Riordan array $\left(-\frac{1}{1+x}, \frac{x}{1+x}\right)$ which begins
$$\left(
\begin{array}{cccccc}
 -1 & 0 & 0 & 0 & 0 & 0 \\
 1 & -1 & 0 & 0 & 0 & 0 \\
 -1 & 2 & -1 & 0 & 0 & 0 \\
 1 & -3 & 3 & -1 & 0 & 0 \\
 -1 & 4 & -6 & 4 & -1 & 0 \\
 1 & -5 & 10 & -10 & 5 & -1 \\
\end{array}
\right).$$ 
Its generating function is given by 
$$\frac{-\frac{1}{1+x}}{1-y \frac{x}{1+x}}=\frac{-1}{1-x(y-1)}.$$ 
In order to find the required inversion, we must therefore solve the equation 
$$\frac{-u}{1-u(y-1)}=x.$$ 
The solution gives us 
$$\frac{u}{x}=\frac{-1}{1-x(y-1)}.$$
Thus we have 
$$\left(-\frac{1}{1+x}, \frac{x}{1+x}\right)^!=\left(-\frac{1}{1+x}, \frac{x}{1+x}\right).$$ 
The Riordan array $\left(-\frac{1}{1+x}, \frac{x}{1+x}\right)$ is an example of a Riordan array that is self-dual.
\end{example}
\begin{example} Our next example concerns the \emph{cubical trialgebra operad} \cite{Loday}. The generating series of this operad is the generating series of the family of cubes $\frac{-x}{1+(y+2)x}$. This is $x$ times the  generating function of the Riordan array $\left(\frac{-1}{1+2x}, \frac{-x}{1+2x}\right)$. In order to find its inversion, we solve the equation
$$\frac{-u}{1+(y+2)u}=x,$$ to find that 
$$u=\frac{-x}{1+(y+2)x}.$$
Thus we have that 
$$\left(\frac{-1}{1+2x}, \frac{-x}{1+2x}\right)^!=\left(\frac{-1}{1+2x}, \frac{-x}{1+2x}\right).$$ 
That is, the Riordan array $\left(\frac{-1}{1+2x}, \frac{-x}{1+2x}\right)$ is self-dual. It is also an involution in the group of Riordan arrays: we have 
$$\left(\frac{-1}{1+2x}, \frac{-x}{1+2x}\right)^2=I=(1,x).$$
In general, we have 
$$\left(\frac{-1}{1+rx}, \frac{-x}{1+rx}\right)^!=\left(\frac{-1}{1+rx}, \frac{-x}{1+rx}\right),$$ since the solution of 
$$\frac{-u}{1+(y+r)u}=x,$$ with $u(0)=0$ is given by
$$u=\frac{-x}{1+(y+r)x}.$$
These matrices are again involutions in the group of Riordan arrays:
$$\left(\frac{-1}{1+rx}, \frac{-x}{1+rx}\right)^2=I.$$
\end{example}
\section{Inversions of one-parameter families}
In this section we investigate the inversions of elements of one-parameter families of Riordan arrays.
\begin{example} \textbf{The Riordan arrays} $\mathbf{(1+rx, x)}$. The general element of this family begins
$$\left(
\begin{array}{cccccc}
 1 & 0 & 0 & 0 & 0 & 0 \\
 r & 1 & 0 & 0 & 0 & 0 \\
 0 & r & 1 & 0 & 0 & 0 \\
 0 & 0 & r & 1 & 0 & 0 \\
 0 & 0 & 0 & r & 1 & 0 \\
 0 & 0 & 0 & 0 & r & 1 \\
\end{array}
\right).$$
This array has generating function $\frac{(1+rx)}{1-xy}$ from which we deduce that the generating function of the inversion is given by
$$\frac{\sqrt{1+2(y+2r)x+x^2y^2}-xy-1}{2rx}.$$
This expands to give the arrays that begin
$$\left(
\begin{array}{cccccc}
 1 & 0 & 0 & 0 & 0 & 0 \\
 -r & -1 & 0 & 0 & 0 & 0 \\
 2 r^2 & 3 r & 1 & 0 & 0 & 0 \\
 -5 r^3 & -10 r^2 & -6 r & -1 & 0 & 0 \\
 14 r^4 & 35 r^3 & 30 r^2 & 10 r & 1 & 0 \\
 -42 r^5 & -126 r^4 & -140 r^3 & -70 r^2 & -15 r & -1 \\
\end{array}
\right).$$
In order to find the general $(n,k)$-th element of this matrix, we take $g(x)=1+rx$ and $f(x)=x$ in the formula
$$\frac{(-1)^k}{n+1}\binom{n+1}{k}[x^n] f(x)^k \left(\frac{1}{g(x)}\right)^{n+1}.$$
We have
\begin{align*}
[x^n] f(x)^k \left(\frac{1}{g(x)}\right)^{n+1}&=[x^n]x^k \left(\frac{1}{1+rx}\right)^{n+1}\\
&=[x^{n-k}] \sum_{j=0}^{\infty} \binom{-(n+1)}{j}r^j x^j\\
&=[x^{n-k}] \sum_{j=0}^{\infty} \binom{n+j}{j}(-r)^j x^j\\
&=\binom{n+n-k}{n-k}(-r)^{n-j}\\
&=\binom{2n-k}{n-k}(-r)^{n-k}.\end{align*}
Thus we obtain
$$\hat{t}_{n,k}=\frac{(-1)^k}{n+1} \binom{n+1}{k}\binom{2n-k}{n-k}(-r)^{n-k}.$$ We therefore have 
$$(1+rx, x)^!=\left(\frac{(-1)^k}{n+1} \binom{n+1}{k}\binom{2n-k}{n-k}(-r)^{n-k}\right).$$ 
The matrix $((\Rev(xg(x))', \Rev(xg(x)))$ in this case is the matrix
$$\left(\frac{1}{\sqrt{1+4rx}}, \frac{\sqrt{1+4rx}-1}{2r}\right).$$
This therefore has general term $\binom{2n-k}{n-k}(-r)^{n-k}$ and begins
$$\left(
\begin{array}{ccccc}
 1 & 0 & 0 & 0 & 0 \\
 -2 r & 1 & 0 & 0 & 0 \\
 6 r^2 & -3 r & 1 & 0 & 0 \\
 -20 r^3 & 10 r^2 & -4 r & 1 & 0 \\
 70 r^4 & -35 r^3 & 15 r^2 & -5 r & 1 \\
\end{array}
\right).$$
The row sums of the inversion $\sum_{k=0}^n \frac{(-1)^k}{n+1} \binom{n+1}{k}\binom{2n-k}{n-k}(-r)^{n-k}$, which begin $$1, -r - 1, 2r^2 + 3r + 1, - 5r^3 - 10r^2 - 6r - 1, 14r^4 + 35r^3 + 30r^2 + 10r + 1,\ldots$$
are the revert transform of the row sums of $(1+rx, x)$, or
$$1,r+1,r+1, r+1, \ldots.$$
The generating function of the inversion may be expressed as the continued fraction
$$\cfrac{1}{1+(y+r)x-\cfrac{r(y+r)x^2}{1+(y+2r)x-\cfrac{r(y+r)x}{1+(y+2r)x-\cdots}}}.$$
The unsigned triangle $\frac{1}{n+1} \binom{n+1}{k}\binom{2n-k}{n-k}$, which begins 
$$\left(
\begin{array}{cccccc}
 1 & 0 & 0 & 0 & 0 & 0 \\
 1 & 1 & 0 & 0 & 0 & 0 \\
 2 & 3 & 1 & 0 & 0 & 0 \\
 5 & 10 & 6 & 1 & 0 & 0 \\
 14 & 35 & 30 & 10 & 1 & 0 \\
 42 & 126 & 140 & 70 & 15 & 1 \\
\end{array}
\right),$$ 
counts Schroeder paths from $(0,0)$ to $(2n,0)$ having $k$-peaks. This is \seqnum{A060693}. It is the inversion of the Riordan array $(1-x,-x)$ which begins
$$\left(
\begin{array}{cccccc}
 1 & 0 & 0 & 0 & 0 & 0 \\
 -1 & -1 & 0 & 0 & 0 & 0 \\
 0 & 1 & 1 & 0 & 0 & 0 \\
 0 & 0 & -1 & -1 & 0 & 0 \\
 0 & 0 & 0 & 1 & 1 & 0 \\
 0 & 0 & 0 & 0 & -1 & -1 \\
\end{array}
\right).$$ 
It follows that the matrix \seqnum{A088617}
$$\left(
\begin{array}{cccccc}
 1 & 0 & 0 & 0 & 0 & 0 \\
 1 & 1 & 0 & 0 & 0 & 0 \\
 1 & 3 & 2 & 0 & 0 & 0 \\
 1 & 6 & 10 & 5 & 0 & 0 \\
 1 & 10 & 30 & 35 & 14 & 0 \\
 1 & 15 & 70 & 140 & 126 & 42 \\
\end{array}
\right),$$ which counts Schroeder paths from $(0,0)$ to $(2n,0)$ with $k$ up-steps $U=(1,1)$ is the inversion of the triangle that begins 
$$\left(
\begin{array}{cccccc}
 1 & 0 & 0 & 0 & 0 & 0 \\
 -1 & -1 & 0 & 0 & 0 & 0 \\
 1 & 1 & 0 & 0 & 0 & 0 \\
 -1 & -1 & 0 & 0 & 0 & 0 \\
 1 & 1 & 0 & 0 & 0 & 0 \\
 -1 & -1 & 0 & 0 & 0 & 0 \\
\end{array}
\right).$$ 
\end{example}
\begin{example} We now turn to examine the inversions of the matrices of the form $\left(1-\frac{rx}{1+x}, -x\right)=\left(\frac{1-x(r-1)}{1+x},-x\right)$, which begin
$$\left(
\begin{array}{cccccc}
 1 & 0 & 0 & 0 & 0 & 0 \\
 -r & -1 & 0 & 0 & 0 & 0 \\
 r & r & 1 & 0 & 0 & 0 \\
 -r & -r & -r & -1 & 0 & 0 \\
 r & r & r & r & 1 & 0 \\
 -r & -r & -r & -r & -r & -1 \\
\end{array}
\right).$$
The inversion may be found by solving the equation 
$$\frac{u(1-u(r-1))}{(1+u)(1+uy)}=x$$ to get 
$$\frac{u}{x}=\frac{1-(1+y)x-\sqrt{1-2(y+2r-1)x-(1-y)^2x^2}}{2x(xy+r-1)}.$$ 
We obtain the family of matrices that begin
$$\left(
\begin{array}{ccccc}
 1 & 0 & 0 & 0 & 0 \\
 r & 1 & 0 & 0 & 0 \\
 r (2 r-1) & 3 r & 1 & 0 & 0 \\
 r \left(5 r^2-5 r+1\right) & 2 r (5 r-2) & 6 r & 1 & 0 \\
 r \left(14 r^3-21 r^2+9 r-1\right) & 5 r \left(7 r^2-6 r+1\right) & 10 r (3 r-1) & 10 r & 1 \\
\end{array}
\right).$$ For $r=-2,\ldots,2$ we obtain the matrices
$$\left(
\begin{array}{ccccc}
 1 & 0 & 0 & 0 & 0 \\
 -2 & 1 & 0 & 0 & 0 \\
 10 & -6 & 1 & 0 & 0 \\
 -62 & 48 & -12 & 1 & 0 \\
 430 & -410 & 140 & -20 & 1 \\
\end{array}
\right),\left(
\begin{array}{ccccc}
 1 & 0 & 0 & 0 & 0 \\
 -1 & 1 & 0 & 0 & 0 \\
 3 & -3 & 1 & 0 & 0 \\
 -11 & 14 & -6 & 1 & 0 \\
 45 & -70 & 40 & -10 & 1 \\
\end{array}
\right),\left(
\begin{array}{ccccc}
 1 & 0 & 0 & 0 & 0 \\
 0 & 1 & 0 & 0 & 0 \\
 0 & 0 & 1 & 0 & 0 \\
 0 & 0 & 0 & 1 & 0 \\
 0 & 0 & 0 & 0 & 1 \\
\end{array}
\right),$$
$$\left(
\begin{array}{ccccc}
 1 & 0 & 0 & 0 & 0 \\
 1 & 1 & 0 & 0 & 0 \\
 1 & 3 & 1 & 0 & 0 \\
 1 & 6 & 6 & 1 & 0 \\
 1 & 10 & 20 & 10 & 1 \\
\end{array}
\right), \left(
\begin{array}{ccccc}
 1 & 0 & 0 & 0 & 0 \\
 2 & 1 & 0 & 0 & 0 \\
 6 & 6 & 1 & 0 & 0 \\
 22 & 32 & 12 & 1 & 0 \\
 90 & 170 & 100 & 20 & 1 \\
\end{array}
\right).$$
We find that the general $(n,k)$-th term of the inversion matrix is given by
$$\hat{t}_{n,k}=\frac{1}{n+1} \binom{n+1}{k} \sum_{j=0}^{n+1} \binom{n+1}{j}\binom{2n-k-j}{n-k-j}(r-1)^{n-k-j}.$$ The bivariate generating function of the inversion matrix may be expressed as the continued fraction 
$$\cfrac{1}{1-(y+r)x-\cfrac{r(y+r-1)x^2}{1-(y+2r-1)x-\cfrac{r(y+r-1)}{1-(y+2r-1)x-\cdots}}}.$$
The generating function of the initial column of the inversion matrix is given by 
$$\frac{\sqrt{(1+x)^2-4rx}+x-1}{2(1-r)x}=\frac{1}{1-x}c\left(\frac{(r-1)x}{(1-x)^2}\right),$$ where 
$$c(x)=\frac{1-\sqrt{1-4x}}{2x}$$ is the generating function of the Catalan numbers $C_n=\frac{1}{n+1}\binom{2n}{n}$ \seqnum{A000108}. 
The generating function of the row sums of the inversion is given by 
$$\frac{1-2x-\sqrt{1-4rx}}{2(x+r-1)x}.$$ 
This expands to give the sequence with general term 
$$\sum_{k=0}^n \frac{n-k+1}{n+1}\binom{n+k}{k}r^k.$$ 
The coefficient array $\frac{n-k+1}{n+1}\binom{n+k}{k}$ is the array that begins
$$\left(
\begin{array}{cccccc}
 1 & 0 & 0 & 0 & 0 & 0 \\
 1 & 1 & 0 & 0 & 0 & 0 \\
 1 & 2 & 2 & 0 & 0 & 0 \\
 1 & 3 & 5 & 5 & 0 & 0 \\
 1 & 4 & 9 & 14 & 14 & 0 \\
 1 & 5 & 14 & 28 & 42 & 42 \\
\end{array}
\right).$$ This is \seqnum{A009766}, which has many combinatorial interpretations.

The matrix $\left((\Rev(xg(x)))', \Rev(xg(x))\right)$ is the matrix 
$$\left(\frac{1-2r+x+\sqrt{1+2x(1-2r)+x^2}}{2(1-r)\sqrt{1+2x(1-2r)+x^2}}, \frac{\sqrt{1+2x(1-2r)x+x^2}+x-1}{2(1-r)}\right),$$ with general term 

$$\sum_{j=0}^{n+1} \binom{n+1}{j}\binom{2n-k-j}{n-k-j}(r-1)^{n-k-j}.$$  
When $r=2$, the row sums of this matrix gives the sequence that begins 
$$1, 5, 25, 129, 681, 3653, 19825, 108545, 598417, \ldots.$$ 
This counts the number of peaks in Schroeder paths (\seqnum{A002002}). 
\end{example}
\begin{example} \textbf{The Riordan array} $\mathbf{\left(\frac{1}{(1-x)^m},x \right)}$. In this example we calculate $\left(\frac{1}{(1-x)^m},x \right)^!$. We have 
\begin{align*}
\frac{(-1)^k}{n+1}\binom{n+1}{k}[x^n]f(x)^k \left(\frac{1}{g(x)}\right)^{n+1}&=\frac{(-1)^k}{n+1}\binom{n+1}{k}[x^n]x^k ((1-x)^m)^{n+1}\\
&=\frac{(-1)^k}{n+1}\binom{n+1}{k}[x^{n-k}](1-x)^{m(n+1)}\\
&=\frac{(-1)^k}{n+1}\binom{n+1}{k}[x^{n-k}] \sum_{j=0}^{(n+1)m} \binom{(n+1)m}{j}(-1)^j x^j\\
&=\frac{(-1)^k}{n+1}\binom{n+1}{k}\binom{(n+1)m}{n-k}(-1)^{n-k}\\
&=\frac{(-1)^n}{n+1}\binom{n+1}{k}\binom{(n+1)m}{n-k}.\end{align*}
Thus we have 
$$\left(\frac{1}{(1-x)^m},x \right)^!=\left(\frac{(-1)^n}{n+1}\binom{n+1}{k}\binom{(n+1)m}{n-k}\right).$$ 
The numbers $\frac{1}{n+1}\binom{n+1}{k}\binom{(n+1)m}{n-k}$ are the Fuss-Narayana numbers, coefficients of the Fuss-Narayana polynomials \cite{Armstrong, Kirillov}. 
\end{example}
The previous examples involved Riordan arrays for which $f(x)=x$ or $f(x)=-x$. We now look at a case where $f(x)$ is non-trivial.
\begin{example} In this example, we consider the family of Pascal-like triangles $\left(\frac{1}{1-x}, \frac{x(1+rx)}{1-x}\right)$. For instance, when $r=1$ we obtain the binomial matrix, while for $r=1$ we obtain the Delannoy triangle. We will actually work with the variant $\left(\frac{1}{1+x}, -\frac{x(1+rx)}{1+x}\right)$. In order to find the inversion of these matrices, we must therefore solve the equation
$$\frac{u}{1+(1+y)u+ryu^2}=x.$$ We thus obtain the generating function of the inversion as 
$$\frac{u}{x}=\frac{1-(1+y)x-\sqrt{1-2(1+y)x+(1+2y(1-2r)+y^2)x^2}}{2rx^2y}.$$ 
This can be equivalently represented by the continued fraction 
$$\cfrac{1}{1-(1+y)x-\cfrac{ryx^2}{1-(1+y)x-\cfrac{ryx^2}{1-(1+y)x-\cdots}}}.$$
This generating function expands to give the inversion triangles of this family, which begin 
$$\left(
\begin{array}{cccccc}
 1 & 0 & 0 & 0 & 0 & 0 \\
 1 & 1 & 0 & 0 & 0 & 0 \\
 1 & r+2 & 1 & 0 & 0 & 0 \\
 1 & 3(r+1) & 3 (r+1) & 1 & 0 & 0 \\
 1 & 2 (3 r+2) & 2 \left(r^2+6 r+3\right) & 2 (3 r+2) & 1 & 0 \\
 1 & 5(2r+1) & 10 \left(r^2+3 r+1\right) & 10 \left(r^2+3 r+1\right) & 5 (2 r+1) & 1 \\
\end{array}
\right).$$ 
For $r=-2,\ldots ,2$ we obtain the number triangles that begin as follows.
$$\left(
\begin{array}{cccccc}
 1 & 0 & 0 & 0 & 0 & 0 \\
 1 & 1 & 0 & 0 & 0 & 0 \\
 1 & 0 & 1 & 0 & 0 & 0 \\
 1 & -3 & -3 & 1 & 0 & 0 \\
 1 & -8 & -10 & -8 & 1 & 0 \\
 1 & -15 & -10 & -10 & -15 & 1 \\
\end{array}
\right),\left(
\begin{array}{cccccc}
 1 & 0 & 0 & 0 & 0 & 0 \\
 1 & 1 & 0 & 0 & 0 & 0 \\
 1 & 1 & 1 & 0 & 0 & 0 \\
 1 & 0 & 0 & 1 & 0 & 0 \\
 1 & -2 & -4 & -2 & 1 & 0 \\
 1 & -5 & -10 & -10 & -5 & 1 \\
\end{array}
\right),\left(
\begin{array}{cccccc}
 1 & 0 & 0 & 0 & 0 & 0 \\
 1 & 1 & 0 & 0 & 0 & 0 \\
 1 & 2 & 1 & 0 & 0 & 0 \\
 1 & 3 & 3 & 1 & 0 & 0 \\
 1 & 4 & 6 & 4 & 1 & 0 \\
 1 & 5 & 10 & 10 & 5 & 1 \\
\end{array}
\right),$$ 
$$\left(
\begin{array}{cccccc}
 1 & 0 & 0 & 0 & 0 & 0 \\
 1 & 1 & 0 & 0 & 0 & 0 \\
 1 & 3 & 1 & 0 & 0 & 0 \\
 1 & 6 & 6 & 1 & 0 & 0 \\
 1 & 10 & 20 & 10 & 1 & 0 \\
 1 & 15 & 50 & 50 & 15 & 1 \\
\end{array}
\right), \left(
\begin{array}{cccccc}
 1 & 0 & 0 & 0 & 0 & 0 \\
 1 & 1 & 0 & 0 & 0 & 0 \\
 1 & 4 & 1 & 0 & 0 & 0 \\
 1 & 9 & 9 & 1 & 0 & 0 \\
 1 & 16 & 38 & 16 & 1 & 0 \\
 1 & 25 & 110 & 110 & 25 & 1 \\
\end{array}
\right).$$ 
We can regard these arrays as Pascal-like generalizations of the Narayana triangle, which is the case $r=1$. It is interesting to see that the binomial matrix belongs to this one-parameter family. The row sums, which begin
$$1, 2, r + 4, 6r + 8, 2r^2 + 24r + 16, 20r^2 + 80r + 32, 5r^3 + 120r^2 + 240r + 64,\ldots,$$ 
have exponential generating function 
$$\frac{e^{2x} I_1(2 \sqrt{r}x)}{\sqrt{r}x}.$$ For $r=1$, we get $C_{n+1}$. For $r=5$, the sequence begins
$$ 1,  2 , 9 , 38 , 186 , 932,  4889,\ldots.$$ This is \seqnum{A249925}. Interestingly, this is equal to 
$\sum_{k=0}^n C_k C_{n-k} F_{k+1} F_{n-k+1}$, were $F_n$ are the Fibonacci numbers. In general, the row sums are given by 
$$\sum_{k=0}^{\lfloor \frac{n}{2} \rfloor}\binom{n}{2k}2^{n-2k}C_k.$$ The generating function of the row sums can be expressed as the following continue fraction.
$$\cfrac{1}{1-2x-\cfrac{rx^2}{1-2x-\cfrac{rx^2}{1-2x-\cdots}}}.$$
In order to find an expression for the general $(n,k)$-th term of the inversion triangle, we calculate the following. 
\begin{align*} [x^n] f(x)^k \left(\frac{1}{g(x)}\right)^{n+1}
&=[x^n]\left(\frac{-x(1+rx)}{1+x}\right)^k (1+x)^{n-k+1}\\
&=(-1)^k [x^{n-k}] (1+rx)^k (1+x)^{n-k+1}\\
&= (-1)^k [x^{n-k}] \sum_{j=0}^k \binom{k}{j}r^j x^j \sum_{i=0}^{n-k+1} \binom{n-k+1}{i}x^i\\
&= (-1)^k \sum_{j=0}^k \binom{k}{j}r^j \binom{n-k+1}{n-k-j}.\end{align*}
We thus obtain the general $(n,k)$-th element of the inversion to be 
$$\hat{t}_{n,k}=\frac{1}{n+1}\binom{n+1}{k}\sum_{j=0}^k \binom{k}{j}r^j \binom{n-k+1}{n-k-j}.$$ 
The inverse binomial transform of the inversion matrix is the matrix that begins 
$$\left(
\begin{array}{ccccccc}
 1 & 0 & 0 & 0 & 0 & 0 & 0 \\
 0 & 1 & 0 & 0 & 0 & 0 & 0 \\
 0 & r & 1 & 0 & 0 & 0 & 0 \\
 0 & 0 & 3 r & 1 & 0 & 0 & 0 \\
 0 & 0 & 2 r^2 & 6 r & 1 & 0 & 0 \\
 0 & 0 & 0 & 10 r^2 & 10 r & 1 & 0 \\
 0 & 0 & 0 & 5 r^3 & 30 r^2 & 15 r & 1 \\
\end{array}
\right),$$ with general term 
$$\frac{1}{k+1} \binom{n}{k}\binom{k+1}{n-k+1}.$$ 
The row sums of the second inverse binomial transform of the inversion matrix begin
$$1, 0, r, 0, 2r^2, 0, 5r^3, 0, 14r^4, 0, 42r^5,\ldots$$ making the link to the Catalan numbers explicit.
\end{example}
\begin{example} Our last example of this section calculates the inversions $(\hat{t}_{n,k})$ of the Riordan arrays
$$\left(\frac{1}{(1-x)^m}, \frac{x}{1-x}\right)=\left(\binom{n+m-1}{n-k}\right).$$
We have 
\begin{align*}
\frac{(-1)^k}{n+1}\binom{n+1}{k}[x^n]f(x)^k\left(\frac{1}{g(x)}\right)^{n+1}&=
\frac{(-1)^k}{n+1}\binom{n+1}{k}[x^n]\frac{x^k}{(1-x)^k} ((1-x)^m)^{n+1}\\
&=\frac{(-1)^k}{n+1}\binom{n+1}{k}[x^{n-k}] (1-x)^{m(n+1)-k}\\
&=\frac{(-1)^k}{n+1}\binom{n+1}{k}[x^{n-k}] \sum_{j=0}^{\infty}\binom{m(n+1)-k}{j}(-1)^j x^j\\
&=\frac{(-1)^k}{n+1}\binom{n+1}{k} \binom{m(n+1)-k}{n-k}(-1)^{n-k}\\
&=\frac{(-1)^n}{n+1}\binom{n+1}{k} \binom{m(n+1)-k}{n-k}.\end{align*}
Thus we have
$$\left(\frac{1}{(1-x)^m}, \frac{x}{1-x}\right)^!=\left(\binom{n+m-1}{n-k}\right)^!=\left(\frac{(-1)^n}{n+1}\binom{n+1}{k} \binom{m(n+1)-k}{n-k}\right).$$
We can also deduce  that 
$$\left(\binom{m(n+1)-k}{n-k}\right)=\left(\left(\Rev\left(\frac{x}{(1-x)^m}\right)\right)', \frac{\Rev\left(\frac{x}{(1-x)^m}\right)}{1-\Rev\left(\frac{x}{(1-x)^m}\right)}\right).$$ 
In particular, we obtain
$$\binom{m(n+1)}{n}=[x^n]\left(\Rev\left(\frac{x}{(1-x)^m}\right)\right)'=(n+1)[x^{n+1}]\Rev\left(\frac{x}{(1-x)^m}\right),$$ or
$$\frac{1}{n+1} \binom{m(n+1)}{n} = [x^{n+1}]\Rev\left(\frac{x}{(1-x)^m}\right).$$ 
The following table documents these inversions for $m=-1, \ldots, 4$ \cite{Novelli}. 
\begin{center} 
\begin{tabular}{|c|c|c|}\hline
$m$ &$\left(\binom{n+m-1}{n-k}\right)$ & $\left(\binom{n+m-1}{n-k}\right)^!=\left(\frac{(-1)^n}{n+1}\binom{n+1}{k} \binom{m(n+1)-k}{n-k}\right)$\\\hline
$-1$ & $\left(
\begin{array}{ccccc}
 1 & 0 & 0 & 0 & 0 \\
 -1 & 1 & 0 & 0 & 0 \\
 0 & 0 & 1 & 0 & 0 \\
 0 & 0 & 1 & 1 & 0 \\
 0 & 0 & 1 & 2 & 1 \\
\end{array}
\right)$ & $\left(
\begin{array}{ccccc}
 1 & 0 & 0 & 0 & 0 \\
 1 & -1 & 0 & 0 & 0 \\
 2 & -4 & 1 & 0 & 0 \\
 5 & -15 & 9 & -1 & 0 \\
 14 & -56 & 56 & -16 & 1 \\
\end{array}
\right)$\\\hline
$0$ & $\left(
\begin{array}{ccccc}
 1 & 0 & 0 & 0 & 0 \\
 0 & 1 & 0 & 0 & 0 \\
 0 & 1 & 1 & 0 & 0 \\
 0 & 1 & 2 & 1 & 0 \\
 0 & 1 & 3 & 3 & 1 \\
\end{array}
\right)$ & $\left(
\begin{array}{ccccc}
 1 & 0 & 0 & 0 & 0 \\
 0 & -1 & 0 & 0 & 0 \\
 0 & -1 & 1 & 0 & 0 \\
 0 & -1 & 3 & -1 & 0 \\
 0 & -1 & 6 & -6 & 1 \\
\end{array}
\right)$ \\\hline
$1$ & $\left(
\begin{array}{ccccc}
 1 & 0 & 0 & 0 & 0 \\
 1 & 1 & 0 & 0 & 0 \\
 1 & 2 & 1 & 0 & 0 \\
 1 & 3 & 3 & 1 & 0 \\
 1 & 4 & 6 & 4 & 1 \\
\end{array}
\right)$ & $\left(
\begin{array}{ccccc}
 1 & 0 & 0 & 0 & 0 \\
 -1 & -1 & 0 & 0 & 0 \\
 1 & 2 & 1 & 0 & 0 \\
 -1 & -3 & -3 & -1 & 0 \\
 1 & 4 & 6 & 4 & 1 \\
\end{array}
\right)$ \\\hline
$2$ & $\left(
\begin{array}{ccccc}
 1 & 0 & 0 & 0 & 0 \\
 2 & 1 & 0 & 0 & 0 \\
 3 & 3 & 1 & 0 & 0 \\
 4 & 6 & 4 & 1 & 0 \\
 5 & 10 & 10 & 5 & 1 \\
\end{array}
\right)$ & $\left(
\begin{array}{ccccc}
 1 & 0 & 0 & 0 & 0 \\
 -2 & -1 & 0 & 0 & 0 \\
 5 & 5 & 1 & 0 & 0 \\
 -14 & -21 & -9 & -1 & 0 \\
 42 & 84 & 56 & 14 & 1 \\
\end{array}
\right)$  \\\hline
$3$ & $\left(
\begin{array}{ccccc}
 1 & 0 & 0 & 0 & 0 \\
 3 & 1 & 0 & 0 & 0 \\
 6 & 4 & 1 & 0 & 0 \\
 10 & 10 & 5 & 1 & 0 \\
 15 & 20 & 15 & 6 & 1 \\
\end{array}
\right)$ & $\left(
\begin{array}{ccccc}
 1 & 0 & 0 & 0 & 0 \\
 -3 & -1 & 0 & 0 & 0 \\
 12 & 8 & 1 & 0 & 0 \\
 -55 & -55 & -15 & -1 & 0 \\
 273 & 364 & 156 & 24 & 1 \\
\end{array}
\right)$ \\\hline
$4$ & $\left(
\begin{array}{ccccc}
 1 & 0 & 0 & 0 & 0 \\
 4 & 1 & 0 & 0 & 0 \\
 10 & 5 & 1 & 0 & 0 \\
 20 & 15 & 6 & 1 & 0 \\
 35 & 35 & 21 & 7 & 1 \\
\end{array}
\right)$ & $\left(
\begin{array}{ccccc}
 1 & 0 & 0 & 0 & 0 \\
 -4 & -1 & 0 & 0 & 0 \\
 22 & 11 & 1 & 0 & 0 \\
 -140 & -105 & -21 & -1 & 0 \\
 969 & 969 & 306 & 34 & 1 \\
\end{array}
\right)$ \\\hline
\end{tabular}
\end{center}
The last two triangles on the right are signed versions of \seqnum{A243662} and \seqnum{A243663} \cite{Novelli}.
\end{example}

\section{The inversion of exponential Riordan arrays}
In this section we give a brief introduction to the inversion of exponential Riordan arrays. We begin the section by looking at the example of the binomial matrix, but this time, regarded as the exponential Riordan array $[e^x, x]$.
\begin{example} We recall that the exponential Riordan array $[e^x, x]$ has general element given by $\binom{n}{k}$. This is so because we have
\begin{align*}
\frac{n!}{k!}[x^n]e^x x^k&= \frac{n!}{k!} [x^{n-k}] \sum_{i=0}^{\infty} \frac{x^i}{i!}\\
&= \frac{n!}{k!} \frac{1}{(n-k)!}\\
&= \binom{n}{k}.\end{align*}
By the theory of exponential Riordan arrays, the bivariate generating function of this array is given by
$$G_e(x,y)=e^x e^{xy}= e^{x+xy} = e^{x(1+y)}.$$
The revert transform (in $x$) of $G_e(x,y)$ is given by $\frac{d}{dx} \Rev\left(\int_0^x G_e(t,y)\,dt\right)$.
We have that
$$\int_0^x e^{t(1+y)} = \frac{e^{x(1+y)-1}}{1+y}.$$
To carry out the inversion, we must then solve the equation
$$\frac{e^{u(1+y)-1}}{1+y}=x$$ to find the solution $u(x)$ that satisfies $u(0)=0$.
We find that
$$u=\frac{ln(1+x(1+y))}{1+y}.$$
We then differentiate this (with respect to $x$) to get the revert transform of $G_e(x,y)$.
We find that
$$\hat{G}_e(x,y)=\frac{1}{1+x(1+y)}.$$
This is the generating function of the array that begins
$$\left(
\begin{array}{cccccc}
 1 & 0 & 0 & 0 & 0 & 0 \\
 -1 & -1 & 0 & 0 & 0 & 0 \\
 2 & 4 & 2 & 0 & 0 & 0 \\
 -6 & -18 & -18 & -6 & 0 & 0 \\
 24 & 96 & 144 & 96 & 24 & 0 \\
 -120 & -600 & -1200 & -1200 & -600 & -120 \\
\end{array}
\right).$$
This is the inversion of the exponential Riordan array $[e^x, x]$. The general element of this matrix is $(-1)^n n! \binom{n}{k}$. The triangle is a signed version of \seqnum{A196347}.
\end{example}
\begin{example} We consider the inversion of the exponential Riordan array $[\cosh(x), x]$. The array $[\cosh(x), x]$ \seqnum{A119467} begins
$$\left(
\begin{array}{ccccccc}
 1 & 0 & 0 & 0 & 0 & 0 & 0 \\
 0 & 1 & 0 & 0 & 0 & 0 & 0 \\
 1 & 0 & 1 & 0 & 0 & 0 & 0 \\
 0 & 3 & 0 & 1 & 0 & 0 & 0 \\
 1 & 0 & 6 & 0 & 1 & 0 & 0 \\
 0 & 5 & 0 & 10 & 0 & 1 & 0 \\
 1 & 0 & 15 & 0 & 15 & 0 & 1 \\
\end{array}
\right).$$ We have $G_e(x,y)= \cosh(x) e^{xy}$, and
$$ \int_0^x G_e(t,y)\, dt = e^{xy}\left(\frac{e^x}{2(1+y)}-\frac{e^{-x}}{2(1-y)}\right)+\frac{y}{1-y^2}.$$
Unfortunately, there is no closed solution to the equation
$$e^{uy}\left(\frac{e^u}{2(1+y)}-\frac{e^{-u}}{2(1-y)}\right)+\frac{y}{1-y^2}=x.$$
Nevertheless, we can calculate as many rows of the inversion of the above matrix as we like by calculating the first column of the inverse of the exponential Riordan array
$$\left[1, e^{xy}\left(\frac{e^x}{2(1+y)}-\frac{e^{-x}}{2(1-y)}\right)+\frac{y}{1-y^2}\right].$$
For instance, we have
$$\left(
\begin{array}{ccccc}
 1 & 0 & 0 & 0 & 0 \\
 0 & 1 & 0 & 0 & 0 \\
 0 & y & 1 & 0 & 0 \\
 0 & y^2+1 & 3 y & 1 & 0 \\
 0 & y \left(y^2+3\right) & 7 y^2+4 & 6 y & 1 \\
\end{array}
\right)^{-1}=\left(
\begin{array}{ccccc}
 1 & 0 & 0 & 0 & 0 \\
 0 & 1 & 0 & 0 & 0 \\
 0 & -y & 1 & 0 & 0 \\
 0 & 2 y^2-1 & -3 y & 1 & 0 \\
 0 & y \left(7-6 y^2\right) & 11 y^2-4 & -6 y & 1 \\
\end{array}
\right).$$
Thus we expand the polynomial sequence $1, -y, 2y^2-1, y(7-6y^2), \ldots$ to obtain the rows of the inversion of $[cosh(x), x]$. Thus the matrix $[\cosh(x), x]^!$  begins
$$\left(
\begin{array}{ccccccc}
 1 & 0 & 0 & 0 & 0 & 0 & 0 \\
 0 & -1 & 0 & 0 & 0 & 0 & 0 \\
 -1 & 0 & 2 & 0 & 0 & 0 & 0 \\
 0 & 7 & 0 & -6 & 0 & 0 & 0 \\
 9 & 0 & -46 & 0 & 24 & 0 & 0 \\
 0 & -159 & 0 & 326 & 0 & -120 & 0 \\
 -225 & 0 & 2134 & 0 & -2556 & 0 & 720 \\
\end{array}
\right).$$
The first column
$$1,0,-1,0,9,0,-225,\ldots$$ is then the exponential revert transform of the sequence
$$1,0,1,0,1,0,1,0,\ldots$$ which is the expansion of $\cosh(x)$. The numbers occurring in this reversion are the squares of the double factorial numbers.

The row sums of the inversion, which begin
$$1, -1, 1, 1, -13, 47, 73,\ldots,$$ are the exponential revert transform of the row sums
$$1,1,2,4,8,16,\ldots$$ of $[\cosh(x), x]$. This latter sequence has exponential generating function $e^x \cosh(x)$. To find the revert transform of this, we proceed as follows.
First, we calculate
$$\int_0^x e^t \cosh(t)\,dt=\frac{1}{4}\left(e^{2x}+2x-1\right).$$ We then solve the equation
$$\frac{1}{4}\left(e^{2u}+2u-1\right)=x$$ to obtain the solution $u(x)$ such that $u(0)=0$.
We obtain
$$u(x)=\frac{1}{2} \left(-W\left(e^{4 x+1}\right)+4 x+1\right).$$
The revert transform we seek is then the derivative of this. Thus the rows sums
$$1, -1, 1, 1, -13, 47, 73, -2447, 16811, 15551, -1726511,\ldots$$ of the inversion of $[\cosh(x), x]$ have generating function
$$\frac{1}{2} \left(4-\frac{4 W\left(e^{4 x+1}\right)}{W\left(e^{4 x+1}\right)+1}\right).$$
These terms coincide with the coefficients of Airey's converging factor \seqnum{A001662} \cite{Airey}.
Sergei N. Gladkovskii gives the following continued fraction expression for the generating function of the revert transform of $e^x \cosh(x)$:
$$\cfrac{1}{1+\cfrac{x}{1-2x+\cfrac{2x}{1-4x+\cfrac{3x}{1-6x+\cfrac{4x}{1-8x+\cdots}}}}}.$$ 
\end{example}

\section{Conclusions} In this note, we have defined the notion of the inversion of a Riordan array, along with methods for its construction. Examples show that the process of going from a Riordan array to its inversion leads to many number triangles and sequences of combinatorial importance. In the reverse direction, it is probable that many number triangles of combinatorial significance are the inversions of Riordan arrays, or arrays closely related to Riordan arrays. It is hoped that this interplay between Riordan arrays and their inversions will provide extra insight in the context of algebraic combinatorics and related fields.

\bigskip
\hrule
\bigskip
\noindent 2010 {\it Mathematics Subject Classification}: Primary
15A30; Secondary 15B36, 05A10, 13F25.
\noindent \emph{Keywords:} Riordan array, Riordan group, invert transform, revert transform, inversion, Lagrange inversion, associahedron, Narayana numbers, Fuss-Narayana polynomials.

\bigskip
\hrule
\bigskip
\noindent (Concerned with sequences
\seqnum{A000108},
\seqnum{A001263},
\seqnum{A001662},
\seqnum{A002002},
\seqnum{A007318},
\seqnum{A009766},
\seqnum{A049310},
\seqnum{A053121},
\seqnum{A060693},
\seqnum{A084938},
\seqnum{A088617},
\seqnum{A094587},
\seqnum{A097610},
\seqnum{A119467},
\seqnum{A126216},
\seqnum{A196347},
\seqnum{A243662},
\seqnum{A243663}, and
\seqnum{A249925}.)

\end{document}